\newtheorem{theorem}{Theorem}
\newtheorem{lemma}[theorem]{Lemma}
\newtheorem{proposition}[theorem]{Proposition}
\theoremstyle{remark}
\newcommand{\FF}{\mathbf{F}}
\newcommand{\ZZ}{\mathbf{Z}}
\newcommand{\QQ}{\mathbf{Q}}
\newcommand{\CC}{\mathbf{C}}
\newcommand{\OO}{\mathcal{O}}
\newcommand{\cC}{\mathcal{C}}
\DeclareMathOperator{\AGL}{AGL}
\DeclareMathOperator{\Gal}{Gal}
\DeclareMathOperator{\mult}{mult}
\DeclareMathOperator{\Ind}{Ind}
\DeclareMathOperator{\Res}{Res}
\DeclareMathOperator{\Hom}{Hom}
\DeclareMathOperator{\GL}{GL}
\DeclareMathOperator{\PSL}{PSL}
\title{Arithmetically equivalent number fields have approximately the same successive minima}
\author{Floris Vermeulen}
\address{Department of Mathematics, KU Leuven, Belgium}
\email{floris.vermeulen@kuleuven.be}
\date{}
\begin{document}

\begin{abstract}
Let $K$ and $K'$ be arithmetically equivalent number fields, both of degree $d$. We prove that $K$ and $K'$ have the same successive minima, up to a constant depending only on $d$. We give examples showing that one cannot expect equality.
\end{abstract}

\maketitle

\section{Introduction}\label{sec:introduction}

Two number fields $K$ and $K'$ are said to be \emph{arithmetically equivalent} if their Dedekind zeta functions are equal: $\zeta_K(s) = \zeta_{K'}(s)$. Arithmetically equivalent number fields are not necessarily isomorphic, but they always have the same degree, discriminant, signature, Galois closure and more, by a result of Perlis~\cite{perlis1} using Gassmann's theorem~\cite{gassmann}. However, for instance their class number and regulator can differ~\cite{SmitPerlis}. One of the main problems is to figure out how ``isomorphic" arithmetically equivalent fields are, i.e.\ which arithmetic invariants are equal and which ones can differ. We refer to~\cite{sutherland, sutherland_strong, solomatin} for a more elaborate discussion. 

In this article, we study the successive minima of arithmetically equivalent number fields, which we briefly recall. Let $K$ be a degree $d$ number field with ring of integers $\OO_K$ and denote by $\sigma_1, \ldots, \sigma_d: K\to \CC$ the embeddings of $K$ into $\CC$. Then the \emph{Minkowski embedding} is the map
\[
\iota: K\to \CC^d: v\mapsto (\sigma_1(v), \ldots, \sigma_d(v)).
\]
This embedding yields a norm $|| \cdot ||$ on $K$, which for $v\in K$ we normalize as
\[
||v|| = \sqrt{\frac{1}{d}\big(|\sigma_1(v)|^2 + \ldots + |\sigma_d(v)|^2\big)}.
\]
There are different conventions in the literature regarding the Minkowski embedding and the corresponding norm on $K$, for instance by treating the real and complex embeddings separately. However, the above choice of embedding is the most natural in many settings, including ours (see e.g.\ \cite{peikertrosen, neukirch}). Then the $i$-th \emph{successive minimum $\lambda_i$ of $K$} is the smallest real number such that the set
\[
\{v\in \OO_K \mid ||v|| \leq \lambda_i\}
\]
contains $i$ $\QQ$-linearly independent elements. These successive minima can be seen as an approximate refinement of the discriminant of $K$. Indeed, by Minkowski's second theorem~\cite[Thm.\,16]{siegel}, the successive minima satisfy
\[
\lambda_1\lambda_2\cdots \lambda_d = \Theta_d\left(\sqrt{|\Delta_K|}\right),
\]
where $\Delta_K$ is the discriminant of $K$. Since arithmetically equivalent number fields have the same discriminant, it is natural to wonder whether they have (approximately) the same successive minima as well. This problem seems not to have been studied before, and we give a positive answer to this question.

\begin{theorem}\label{thm:main.thm}
Let $K$ and $K'$ be two arithmetically equivalent number fields of degree $d$. Let $\lambda_1 \leq \ldots\leq \lambda_d$ and $\lambda_1'\leq \ldots\leq \lambda_d'$ be the multiset of successive minima of $K$ and $K'$. Then there is a constant $c_d$ depending only on $d$ such that for all $i$
\[
\lambda_i \leq c_d \lambda_i'.
\]
\end{theorem}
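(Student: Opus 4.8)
The plan is to pass to the common Galois closure and turn the theorem into a comparison of two sublattices of a single lattice. Let $L$ be the Galois closure of $K$ (equivalently of $K'$, since arithmetically equivalent fields share a Galois closure), put $G=\Gal(L/\QQ)$, $H=\Gal(L/K)$, $H'=\Gal(L/K')$, so that $[G:H]=[G:H']=d$ and $|G|$ divides $d!$. By Gassmann's theorem~\cite{gassmann} in the form used by Perlis~\cite{perlis1}, the equality $\zeta_K=\zeta_{K'}$ is equivalent to $H$ and $H'$ being \emph{Gassmann equivalent} in $G$, i.e.\ to an isomorphism of $\QQ[G]$-modules $\QQ[G/H]\cong\QQ[G/H']$. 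Since $|G|\le d!$, only finitely many triples $(G,H,H')$ occur for a fixed $d$, so it will be enough to bound the constant in terms of $(G,H,H')$.

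Next I would realize $\OO_K$ and $\OO_{K'}$ as sublattices of the single lattice $\OO_L$. The Minkowski embedding identifies $L\otimes_\QQ\RR$ with a Euclidean space on which $G$ acts by isometries, permuting the coordinates indexed by the embeddings $L\to\CC$; and the chosen normalization is exactly the one making the Minkowski norm of $K$ the \emph{restriction} of that of $L$ (each embedding of $K$ extends to $[L:K]$ embeddings of $L$, and $[L:K][K:\QQ]=[L:\QQ]$, so the factors $\tfrac{1}{d}$ and $\tfrac{1}{[L:\QQ]}$ cancel). Thus $\OO_K=\OO_L\cap(K\otimes\RR)$ and $\OO_{K'}=\OO_L\cap(K'\otimes\RR)$, with all successive minima computed in the ambient metric of $L\otimes\RR$. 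Writing $\pi_H=\tfrac{1}{|H|}\Tr_{L/K}$ for the orthogonal projection onto $K\otimes\RR$, one has $\OO_K\subseteq\pi_H(\OO_L)\subseteq\tfrac{1}{|H|}\OO_K$ with $|H|\le(d-1)!$, and likewise for $H'$; so it suffices to compare the lattices $\pi_H(\OO_L)$ and $\pi_{H'}(\OO_L)$.

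It would then be enough, by symmetry, to produce a $\QQ$-linear isomorphism $\tau\colon K'\to K$ with $\tau(\OO_{K'})\subseteq\OO_K$ and bounded Minkowski operator norm $\|\tau\|\le c_d$, for then $\lambda_i(\OO_K)\le\lambda_i(\tau\OO_{K'})\le\|\tau\|\,\lambda_i(\OO_{K'})$. Equivalently, recalling that $\lambda_1\cdots\lambda_k$ is comparable, up to a factor depending only on $d$, to the least covolume of a rank-$k$ sublattice, I want every rank-$k$ sublattice of $\OO_{K'}$ matched by a rank-$k$ sublattice of $\OO_K$ of covolume larger by at most a factor $c_d$. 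Two natural candidates for $\tau$ present themselves: rescale an abstract $\QQ[G]$-isomorphism $\QQ[G/H']\cong\QQ[G/H]$ to make it integral; or use the twisted trace maps $\tau_a\colon x\mapsto\Tr_{L/K}(ax)$ with $a\in\OO_L$, which automatically satisfy $\tau_a(\OO_{K'})\subseteq\OO_K$ and $\|\tau_a\|\le|H|\max_{\sigma\colon L\to\CC}|\sigma(a)|$.

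The hard part will be to arrange that the operator norm of $\tau$ \emph{and} the denominator needed to make it integral are both bounded in terms of $d$: an abstract $\QQ[G]$-isomorphism is neither an isometry nor integral with controlled denominator, while a twisted trace $\tau_a$ is cheap but can be degenerate, and bounding the least $a$ with $\tau_a$ invertible is precisely the crux. To handle the integral structure I would use that $H$ and $H'$ are Gassmann equivalent not merely rationally but \emph{locally}, $\ZZ_p[G/H]\cong\ZZ_p[G/H']$ for every prime $p$ (these lattices lie in one genus), which controls matters at the finitely many primes dividing $|G|$; to handle the metric I would use the invariants pinned down by $\zeta_K=\zeta_{K'}$ --- the identity $\operatorname{covol}\OO_K\asymp_d\sqrt{|\Delta_K|}=\sqrt{|\Delta_{K'}|}\asymp_d\operatorname{covol}\OO_{K'}$ coming from Minkowski's second theorem, together with analogous covolume identities for the isotypic pieces of $L\otimes\RR$ and for intersections with subfields --- to bound the $G$-equivariant positive operator that relates the two invariant Euclidean structures, and hence $\|\tau\|$. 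Reconciling this real (metric) information with the $p$-adic (integral) information in a single map $\tau$ is where I expect essentially all the difficulty to lie; once it is done, the symmetric estimate gives $\lambda_i\le c_d\lambda_i'$.
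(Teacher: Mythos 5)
Your framework --- pass to the common Galois closure $L$, use that the Minkowski norm on $K$ is the restriction of the one on $L$, and try to build a $\QQ$-linear isomorphism between $K'$ and $K$ that is integral on rings of integers and has operator norm bounded purely in terms of $d$ --- is exactly the setup of the paper's second proof, and your first candidate (an integral rescaling of a $\QQ[G]$-isomorphism $\QQ[G/H']\cong\QQ[G/H]$) is the right one. But you leave the heart of the argument unresolved, and you say so yourself; the missing observation is what makes the difficulty you flag evaporate.

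A $\ZZ[G]$-morphism $\ZZ[G/H]\to\ZZ[G/H']$ is the same thing as an element $\phi=\sum_i c_{g_iH}\,g_i\in\ZZ[G]$ whose integer coefficients are constant on the double cosets $H'\backslash G/H$. Once one writes $\phi$ this way, \emph{both} constraints you worry about reconciling become automatic: $\phi(\OO_K)\subseteq\OO_{K'}$ because the $c$'s are integers and each $g_i$ preserves $\OO_L$ (so there is no denominator and no $p$-adic analysis needed), and the Minkowski operator norm is at most $\sum_i|c_{g_iH}|$ because each $g_i$ is a Minkowski isometry (so there is no covolume or positive-operator argument needed). You are therefore not trying to reconcile metric information with integral information: a single double-coset-indexed integer matrix does both at once. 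The only thing left to arrange is invertibility with small integer coefficients, and the paper handles this with the Schwartz--Zippel lemma applied to the determinant of the universal $d\times d$ matrix $A$ in the $t=\#(H'\backslash G/H)$ variables; this determinant is a nonzero integer polynomial because $\QQ[G/H]\cong\QQ[G/H']$, so one can choose $|a_i|\le d$ with $\det A\ne 0$, giving operator norm at most $d^2$, and a refinement using the $t$ basic morphisms $\phi_i$ separately (each of norm at most $d$) gives $c_d=d$.

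Your twisted-trace candidates $\tau_a(x)=\Tr_{L/K}(ax)$ are less well suited: there the coefficients are the algebraic numbers $h(a)$ for $h\in H$, so the operator norm depends on $\max_\sigma|\sigma(a)|$, and producing an $a\in\OO_L$ with $\tau_a$ invertible on $K'$ and $\max_\sigma|\sigma(a)|$ bounded in terms of $d$ alone is exactly the kind of thing you would have to prove but don't. The appeal to $\ZZ_p[G/H]\cong\ZZ_p[G/H']$ for all $p$ is unnecessary once the double-coset description is available, and it is a stronger assertion than rational Gassmann equivalence at primes dividing $|G|$; the paper never needs it. As written, the proposal correctly identifies the target and the crux but does not supply the construction that makes the proof work, so it is a plan rather than a proof.
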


The strength of this result lies in the independence of the constant $c_d$ on the specific number fields $K$ and $K'$. It is natural to normalize the successive minima and instead look at the quantities
\[
\mu_1 = \frac{\log \lambda_1}{\log(|\Delta_K|)/2}, \ldots, \mu_d = \frac{\log \lambda_d}{\log(|\Delta_K|)/2}.
\]
Indeed, Minkowski's second theorem implies that the $\mu_i$ sum up to roughly 1, i.e.\ $\sum_i \mu_i = 1 + O_d(\log(|\Delta_K|)^{-1})$. Now we can rephrase our result as stating that
\[
|\mu_i - \mu_i'| \leq \frac{\log c_d}{\log |\Delta_K|}.
\]
So if the absolute discriminant $|\Delta_K|$ is large compared to $c_d$, then the $\mu_i$ and $\mu_i'$ are very close together. Of course, the strength of the result also depends on the size of the constant $c_d$. In Section~\ref{sec:betterbounds} we will prove that one may take $c_d = d$, and additionally provide a more direct comparison between the lattices $\OO_K$ and $\OO_{K'}$ by constructing an injective linear map $\OO_K\to \OO_{K'}$ of operator norm bounded by $d^2$.

We give two proofs of Theorem~\ref{thm:main.thm}. The first one passes through the common Galois closure $L$ of $K$ and $K'$ and expresses the successive minima of $K$ and $K'$ in terms of the successive minima of lattices in $L$ coming from irreducible representations of $\Gal(L/\QQ)$. There is an analogue of Theorem~\ref{thm:main.thm} in the function field setting, where one has an exact equality of scrollar invariants of Gassmann equivalent function fields~\cite{syzrep} (see~\cite[Sec.\,7]{hessRR} for this analogy), and this proof is similar in spirit. However, compared to the function field setting this proof does not seem to give good bounds on the required constant $c_d$. In particular, a rough argument shows that this proof gives $c_d = (d!)^3$. 

The second proof is more direct, by explicitly constructing a $\ZZ$-linear map $\OO_K\to \OO_K'$ using ideas from~\cite{bosmasmit_classnumberrelations, sutherland}. Here we obtain much better bounds on $c_d$, in particular proving Theorem~\ref{thm:main.thm} with $c_d = d$. Moreover, this method is also well-suited to explicit computations in low degree. We prove in Section~\ref{sec:example} that one may take $c_7 = 3$ and $c_8 = 3$.

\subsection*{Acknowledgements} The author thanks Wouter Castryck for proposing this question, helpful discussions, and for providing the code in section \ref{sec:example}. The author thanks Aurel Page and Fabrice Etienne for interesting discussions and for providing the argument to get $c_d = d$. The author thanks Casper Putz for interesting discussions. The author is supported by F.W.O. Flanders (Belgium) with grant number 11F1921N.

\section{Some representation theory}

In this section, we recall two representation theoretic lemmas which we will need. Throughout, $G$ will be a finite group. For a representation $V$, we will denote its character by $\chi_V$. For two representations $U, V$ of $G$, we denote by $\langle V, U\rangle$ the inner product $\langle \chi_V, \chi_U\rangle$. 

\begin{lemma}\label{lem:projection.isotypic}
Let $V$ be an irreducible $\QQ$-representation of $G$. Then there exists an $f_V \in \ZZ[G]$ such that $f_V$ induces the zero map on $U$ for every irreducible $\QQ$-representation $U\neq V$ of $G$ and such that $f_V$ is invertible as a $\QQ$-linear map on $V$.
\end{lemma}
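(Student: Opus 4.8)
The plan is to first produce the desired element over $\QQ$ and then clear denominators. Work in the group algebra $\QQ[G]$, which by Maschke's theorem and Wedderburn's theorem decomposes as a product $\prod_i M_{n_i}(D_i)$ of matrix algebras over division rings $D_i$, one factor for each irreducible $\QQ$-representation. Let $e_V \in \QQ[G]$ be the central idempotent projecting onto the factor corresponding to $V$; it acts as the identity on $V$ and as zero on every other irreducible $\QQ$-representation $U$. This $e_V$ is the classical isotypic projector, and it lies in $\QQ[G]$ precisely because we are working with $\QQ$-representations (over $\QQ$ the central primitive idempotents are $\QQ$-rational, as they are Galois-stable combinations of the $\CC$-idempotents). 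The element $e_V$ already has all the required properties except that its coefficients are rational rather than integral.

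Next I would clear denominators: choose a positive integer $N$ so that $f_V := N e_V \in \ZZ[G]$. This $f_V$ still induces the zero map on every irreducible $\QQ$-representation $U \neq V$, since $e_V$ does. On $V$ it acts as multiplication by the scalar $N$ (because $e_V$ acts as the identity there), so in particular it is invertible as a $\QQ$-linear endomorphism of $V$, with inverse $\tfrac1N\,e_V$ restricted to $V$. That is exactly the statement of the lemma.

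The only point that needs a little care — and the main (though mild) obstacle — is verifying that the central idempotent $e_V$ genuinely has rational coefficients, as opposed to coefficients in some number field, so that scaling by an integer $N$ suffices to land in $\ZZ[G]$. One clean way to see this: the primitive central idempotents of $\QQ[G]$ are in bijection with the irreducible $\QQ$-representations (equivalently with the Wedderburn factors of $\QQ[G]$), and each such idempotent lies in $\QQ[G]$ by definition of this decomposition; concretely $e_V = \sum_{g\in G} c_g\, g$ with $c_g = \tfrac{\dim_\QQ V}{|G|\,[\mathrm{End}_G(V):\QQ]}\,\chi_V(g^{-1})$ when $V$ is absolutely irreducible, and in general $e_V$ is the sum of the Galois conjugates of the corresponding complex idempotent, which is manifestly $\Gal(\overline{\QQ}/\QQ)$-invariant and hence has coefficients in $\QQ$. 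Once $e_V \in \QQ[G]$ is established, clearing denominators is immediate and the lemma follows. One could alternatively bypass idempotents entirely and argue more softly: the subspace of $\QQ[G]$ acting as zero on all $U\neq V$ is a nonzero $\QQ$-subspace (it contains $e_V$), hence meets $\ZZ[G]$ in a full-rank lattice, and any element of that lattice not killing $V$ works — but since one must still exhibit $e_V$ to know the subspace is nonzero and to know some element survives on $V$, the direct approach above is cleaner.
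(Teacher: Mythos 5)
Your proof is correct and is essentially the same as the paper's: both produce a rational multiple of the central primitive idempotent $e_V$ and observe it has the two required properties. The paper just writes the integral element directly as $f_V=\sum_{g\in G}\chi_V(g^{-1})\,g$ (integrality is automatic since $\chi_V$ takes integer values for a $\QQ$-representation), citing Serre for the annihilation of the other isotypic components and checking invertibility on $V$ after base change to $\CC$, whereas you first exhibit $e_V\in\QQ[G]$ via the Wedderburn decomposition and then clear denominators; the resulting element is the same up to a nonzero rational scalar.
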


\begin{proof}
Consider the element
\[
f_V = \sum_{g\in G} \chi_V (g^{-1}) g \in \ZZ[G]
\]
and note that it induces the zero map on every irreducible $\QQ$-representation $U\neq V$ of $G$, see e.g.\ ~\cite[Thm.\,8]{serreRepr}. On the other hand, we see that on $V\otimes \CC$ the map $f_V$ becomes invertible, and hence the same holds over $\QQ$.
\end{proof}

\begin{lemma}\label{lem:dim.fixed}
Let $V$ be a $\QQ$-representation of $G$ and let $H$ be a subgroup of $G$. Then the dimension of $V^H$ is equal to $\langle V, \QQ[G/H] \rangle$.
\end{lemma}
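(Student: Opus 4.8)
The plan is to recognize the permutation representation $\QQ[G/H]$ as the induced representation $\Ind_H^G \mathbf{1}_H$ of the trivial representation of $H$, so that the assertion is exactly Frobenius reciprocity: $\langle V, \Ind_H^G \mathbf{1}_H\rangle = \langle \Res_H^G V, \mathbf{1}_H\rangle = \dim V^H$. Since we only need the statement at the level of characters, I would avoid invoking the formalism of induced representations and instead give the short self-contained character computation below.

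First I would record that $\dim V^H = \tfrac{1}{|H|}\sum_{h\in H}\chi_V(h)$: the element $e_H := \tfrac{1}{|H|}\sum_{h\in H} h \in \QQ[G]$ acts on $V$ as the projection onto $V^H$, so its trace on $V$ equals $\dim V^H$. Next I would compute the character of the permutation module: $\chi_{\QQ[G/H]}(g)$ is the number of cosets $xH\in G/H$ fixed by $g$, that is $\chi_{\QQ[G/H]}(g) = \#\{xH \in G/H : x^{-1}gx \in H\}$, a non-negative integer; in particular $\chi_{\QQ[G/H]}$ is real-valued, so $\langle V, \QQ[G/H]\rangle = \tfrac{1}{|G|}\sum_{g\in G}\chi_V(g)\,\chi_{\QQ[G/H]}(g)$ with no complex conjugate to track. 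Then I would interchange the order of summation and change variables $g = xhx^{-1}$:
\[
\langle V, \QQ[G/H]\rangle = \frac{1}{|G|}\sum_{xH\in G/H}\ \sum_{\substack{g\in G\\ x^{-1}gx\in H}} \chi_V(g) = \frac{1}{|G|}\sum_{xH\in G/H}\ \sum_{h\in H}\chi_V(xhx^{-1}) = \frac{[G:H]}{|G|}\sum_{h\in H}\chi_V(h),
\]
using that $\chi_V$ is a class function in the last two steps. Since $[G:H]/|G| = 1/|H|$, the right-hand side equals $\tfrac{1}{|H|}\sum_{h\in H}\chi_V(h) = \dim V^H$, which is the claim.

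A brief remark on the ground field is the only subtlety, and it is not a real obstacle: although $V$ is a $\QQ$-representation, its character takes values in $\ZZ\subseteq\RR$, the inner product $\langle\,\cdot\,,\cdot\,\rangle$ is computed by exactly the same averaging formula as over $\CC$, and $\dim_\QQ V^H = \dim_\CC (V\otimes_\QQ\CC)^H$; hence the computation above is valid verbatim over $\QQ$. The only step that warrants care is the change of variables in the double sum together with the observation that the permutation character is real, which is precisely what makes the bare-hands version of Frobenius reciprocity go through.
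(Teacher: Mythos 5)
Your proposal is correct, and your opening paragraph is in essence the paper's proof: the paper identifies $\QQ[G/H]$ with $\Ind_H^G \mathbf{1}$ and writes $\dim V^H = \dim_\QQ \Hom_H(\mathbf{1}, \Res_H^G V) = \dim_\QQ \Hom_G(\Ind_H^G \mathbf{1}, V) = \langle V, \QQ[G/H]\rangle$ via Frobenius reciprocity at the level of $\Hom$-spaces. The remainder of your argument then unwinds this same adjunction into a self-contained character computation: the trace of the idempotent $e_H$ gives $\dim V^H$, the permutation character counts fixed cosets, and the interchange of sums with the change of variables $g = xhx^{-1}$ is exactly the character-theoretic proof of Frobenius reciprocity. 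This buys you independence from any black-boxed representation theory at the cost of a slightly longer argument; your care with the ground field (trace equals rank for an idempotent, $(V\otimes\CC)^H = V^H\otimes\CC$, integrality and reality of $\chi_{\QQ[G/H]}$) is correct and addresses the only genuine subtlety.
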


\begin{proof}
Denote by $1$ the trivial representation. We have that
\begin{align*}
\dim V^H &= \mult(1, \Res_{H}^G V) = \dim_\QQ \Hom_H(1, \Res_H^G V) \\
	&= \dim_\QQ \Hom_G(\Ind_H^G 1, V) = \langle V, \QQ[G/H] \rangle,
\end{align*}
where we have used Frobenius reciprocity.
\end{proof}

Note that if $V$ is absolutely irreducible, i.e.\ $\langle V, V \rangle = 1$, then $\langle V, \QQ[G/H]\rangle$ is equal to the multiplicity of $V$ in $\QQ[G/H]$.

\section{Successive minima of arithmetically equivalent number fields}\label{sec:main.thm}

In this section we give a first proof of Theorem \ref{thm:main.thm}. We first introduce some useful notation.

Let $A$ be some object, e.g.\ an integer or a group. For two multisets of positive real numbers $\{\lambda_1, \ldots, \lambda_m\}$ and $\{\lambda_1', \ldots, \lambda_m'\}$ we define 
\[
\{\lambda_1, \ldots, \lambda_m\} \ll_A  \{\lambda_1', \ldots, \lambda_m'\}
\]
if there exists a constant $c=c_A>0$ depending on $A$ such that after reordering both multisets as $\lambda_1\leq \ldots \leq \lambda_m, \lambda_1'\leq \ldots \leq \lambda_m'$, we have for all $i$ that
\[
\lambda_i \leq c \lambda_i'.
\]
We use the notation $\{\lambda_i\}_i =_A \{\lambda_i'\}_i$ to mean that $\{\lambda_i\}_i \ll_A \{\lambda_i'\}_i$ and $\{\lambda_i'\}_i\ll_A \{\lambda_i\}_i$. For a multiset $\{\lambda_1, \ldots, \lambda_m\}$ and an integer $k$ we define $k\cdot\{\lambda_1, \ldots, \lambda_m\}$ to be the multiset $\{\lambda_1, \ldots, \lambda_1, \lambda_2, \ldots, \lambda_2, \ldots, \lambda_m, \ldots, \lambda_m\}$ where every $\lambda_i$ appears $k$ times.

Let $K$ be a degree $d$ number field with ring of integers $\OO_K$. We follow the notation from Section~\ref{sec:introduction} regarding the Minkowski embedding, the resulting norm and the successive minima. Note that if $L/K$ is a field extension then the resulting norm on $L$ extends the one on $K$, precisely because of the normalization constant $1/\sqrt{d}$ in the definition of the norm. It is not necessarily true that there exists a basis of $\OO_K$ as a $\ZZ$-module achieving the successive minima, but we will not need more technical notions such as Minkowski reduced bases. For more background on successive minima we refer to \cite{siegel}.

Let $L$ be a Galois extension of $\QQ$ with Galois group $G$. By the normal basis theorem, $L$ is isomorphic to $\QQ[G]$ as a $\QQ[G]$-module. For $V$ an irreducible $\QQ$-representation of $G$, we denote by $L_V$ the \emph{isotypic component} in $L$ corresponding to $V$. Recall that this consists of all elements of $L$ which generate $V$ as a representation (together with $0$). Since we are in characteristic $0$, $\QQ[G]$ is semi-simple and is the direct sum of its isotypic components. Define $\OO_V = \OO_L\cap L_V$, which is a sublattice of $\OO_L$ of rank $\dim(V)^2 / \langle V, V\rangle$. The successive minima of $\OO_V$ will play a central role in the proof of Theorem~\ref{thm:main.thm}, and the following lemma may be of independent interest.

\begin{lemma}\label{lem:successive.minima.isotypical}
Let $L$ be a Galois extension of $\QQ$ with Galois group $G$, and let $V$ be an irreducible $\QQ$-representation of $G$. Then the multiset of successive minima of $\OO_V$ is of the form
\[
\dim(V)\cdot \{\lambda_{V, 1}, \lambda_{V, 2}, \ldots, \lambda_{V, n_V} \},
\]
where $n_V = \dim(V) / \langle V, V \rangle$. 
\end{lemma}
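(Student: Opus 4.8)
The plan is to exploit the fact that $\OO_V$ carries a natural $\ZZ[G]$-module structure — indeed $L_V$ is a $\QQ[G]$-submodule of $L\cong\QQ[G]$, so $\OO_V = \OO_L\cap L_V$ is a $G$-stable lattice — and that $G$ acts by isometries on $L$ with respect to the Minkowski norm. This isometry property is the crux: the Galois embeddings $\sigma_1,\dots,\sigma_d$ are permuted by precomposition with elements of $G$, so $\|g\cdot v\| = \|v\|$ for every $g\in G$ and $v\in L$. Consequently, if $v$ is a vector of $\OO_V$ achieving some successive minimum, then the entire $G$-orbit, and hence the $\QQ[G]$-submodule of $L_V$ generated by $v$, consists of vectors of norm $\le\|v\|$ (after noting that $\ZZ[G]$-combinations only involve finitely many unit-norm operators, so one must be a little careful — the right statement is that the cyclic $\QQ[G]$-module $\QQ[G]v$ contains a full-rank sublattice all of whose vectors have bounded norm, or more cleanly that $\QQ[G]v\cong V$ as a representation since $v\in L_V$ generates a copy of $V$).

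The key step is then a pigeonhole/dimension-counting argument organizing the successive minima into blocks of size $\dim(V)$. I would proceed greedily: let $v_1\in\OO_V$ be a shortest nonzero vector, with norm $\lambda_{V,1}:=\lambda_1(\OO_V)$. The submodule $\QQ[G]v_1\subseteq L_V$ is isomorphic to $V$, hence has $\QQ$-dimension $\dim V$, and it contains $\dim V$ linearly independent lattice vectors each of norm $\le C\cdot\lambda_{V,1}$ for some constant — here one needs that a cyclic lattice $\ZZ[G]v_1$ spanning a copy of $V$ has all of its successive minima comparable to $\|v_1\|$; since $G$ acts by isometries this is essentially the statement that the orbit $G\cdot v_1$ already spans $\QQ[G]v_1$ over $\QQ$ (true because $v_1\in L_V$ generates $V$), so the $|G|$ orbit vectors, all of norm exactly $\lambda_{V,1}$, include $\dim V$ linearly independent ones. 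Thus $\lambda_{\dim V}(\OO_V)=\lambda_{V,1}$. Now pass to $\OO_V/(\ZZ[G]v_1\cap\OO_V)$, or rather to the orthogonal complement, and iterate: the next shortest vector $v_2$ not in $\Span_\QQ(G\cdot v_1)$ generates another copy of $V$, contributing $\dim V$ more successive minima all equal (up to the isometry argument, exactly) to $\lambda_{V,2}:=\|v_2\|$, and so on. After $n_V=\dim(V)/\langle V,V\rangle$ steps we have exhausted the rank $\dim(V)^2/\langle V,V\rangle$ of $\OO_V$, giving the claimed multiset $\dim(V)\cdot\{\lambda_{V,1},\dots,\lambda_{V,n_V}\}$.

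The main obstacle is making the passage from "the $G$-orbit of a short vector has unit-norm-bounded vectors" to "the full block of $\dim V$ successive minima genuinely coincide" completely rigorous, since $G$ acting by isometries does not a priori mean arbitrary $\ZZ$-linear combinations of orbit vectors are short. The clean way around this is: because $G$ acts by isometries and $v$ generates a copy of the \emph{irreducible} $V$, the orbit $\{g\cdot v : g\in G\}$ spans $\QQ[G]v\cong V$ over $\QQ$, so one can extract $\dim_\QQ V$ linearly independent vectors directly from the orbit, each of which has norm exactly $\|v\|$ — no $\ZZ$-combinations needed. This gives \emph{equality} $\lambda_{V,j\dim V - k}(\OO_V)=\lambda_{V,j}$ within each block for $0\le k<\dim V$, which is even stronger than the asymptotic statement and explains why the multiset has exactly this rigid block structure. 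The only remaining care is the inductive step: after removing $\Span_\QQ(G\cdot v_1,\dots,G\cdot v_{j})$, this span is $G$-stable and is a sum of copies of $V$, so its intersection with $\OO_V$ is a $G$-stable sublattice and the quotient lattice still has $G$ acting by (quotient) isometries, allowing the argument to repeat verbatim.
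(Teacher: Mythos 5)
Your proof takes essentially the same route as the paper: the decisive point is that $G$ acts by isometries for the Minkowski norm, so the $G$-orbit of a shortest vector of $\OO_V$ consists of vectors of identical length spanning a $\QQ[G]$-submodule of $L_V$, and one then iterates greedily.

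One small inaccuracy worth flagging: you assert that $\QQ[G]v_1\cong V$ for any nonzero $v_1\in L_V$. This is false in general. Identifying $L_V$ with the factor $M_n(D)$ of $\QQ[G]$ in the Wedderburn decomposition (where $D=\End_G(V)$), the cyclic module $\QQ[G]v_1$ is isomorphic to $V^{\oplus r}$ where $r$ is the rank of $v_1$ as a matrix over $D$; for instance a generic $v_1$ generates all of $L_V$. This does not break your argument, because what you actually use — that $G\cdot v_1$ contains at least $\dim V$ linearly independent vectors, all of norm exactly $\|v_1\|$ — remains true when $\QQ[G]v_1\cong V^{\oplus r}$ with $r>1$; one simply gets $r\dim V$ equal successive minima in one step, which is still of the form $\dim(V)\cdot\{\cdots\}$ (with $\lambda_{V,1}=\cdots=\lambda_{V,r}$). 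Your bookkeeping ``after $n_V$ steps we have exhausted the rank'' should accordingly be read as ``after at most $n_V$ steps''; the paper's own proof is similarly terse on this point.
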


In other words, this lemma states that every successive minimum of $\OO_V$ appears with multiplicity $\dim (V)$. We call the $\lambda_{V, i}$ the \emph{succesive minima of $V$ in $L$}.

\begin{proof}
The proof proceeds inductively. The main point is that if $v$ is in $L$ and $g$ in $G$, then $||v|| = ||gv||$. Take a non-zero vector $v_1$ in $\OO_V$ for which $||v_1||$ is minimal. By acting with $G$ on $v_1$, we obtain $\dim(V)$ linearly independent vectors in $\OO_V$ with the same length, showing that the first successive minimum appears with multiplicity $\dim (V)$. Now take a vector $v_2$ of shortest length which is linearly independent from $\{gv_1\mid g\in G\}$, and act with $G$ on $v_2$. Continuing in this manner gives the result.
\end{proof}

Our main tool is the following lemma.

\begin{lemma}\label{lem:successive.minima.subfield}
Let $L$ be a Galois extension of $\QQ$ with Galois group $G$ and let $H$ be a subgroup of $G$. Let $\{\lambda_1, \ldots, \lambda_m\}$ be the successive minima of $L^H$. Then 
\[
\{\lambda_1, \ldots, \lambda_m\} =_G \bigcup_{V} \langle V, \QQ[G/H] \rangle \cdot \{\lambda_{V, 1}, \ldots, \lambda_{V, n_V}\},
\]
where the union is as a multiset, and is over all irreducible $\QQ$-representations $V$ of $G$.
\end{lemma}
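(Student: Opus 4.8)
The strategy is to decompose the lattice $\OO_{L^H}$ into pieces indexed by the irreducible $\QQ$-representations $V$ of $G$, and to identify each piece, up to a comparison of lattices bounded in terms of $G$, with a direct sum of $\langle V, \QQ[G/H]\rangle$ copies of ``one row worth'' of $\OO_V$. Concretely: $L^H$ is the subspace of $G$-fixed vectors under $H$, so it splits as $L^H = \bigoplus_V (L_V)^H$ over the isotypic components, and by Lemma~\ref{lem:dim.fixed} the dimension of $(L_V)^H$ is $\langle V, \QQ[G/H]\rangle$. The norm on $L$ is the restriction of the norm on $L^H$ (and vice versa, by the normalization), and the Minkowski norm is $G$-invariant, which is exactly the property exploited in the proof of Lemma~\ref{lem:successive.minima.isotypical}. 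So the whole problem is to understand, for a single irreducible $V$, how the successive minima of the lattice $\OO_L \cap (L_V)^H$ relate to those $\lambda_{V,1},\dots,\lambda_{V,n_V}$ of $\OO_V$.

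The first key step is to show the multiset of successive minima of $\OO_{L^H}$ equals (in the $=_G$ sense) the disjoint union over $V$ of the successive minima of the lattices $\OO_L \cap (L_V)^H$. This follows because $\OO_{L^H}$ and $\bigoplus_V \big(\OO_L \cap (L_V)^H\big)$ are commensurable lattices whose index is bounded in terms of $G$ alone: both sit inside $\OO_L$, the projections $L \to L_V$ are given by the idempotents $e_V \in \QQ[G]$ whose denominators divide $|G|$, and an orthogonal-type direct sum decomposition of a lattice changes the successive minima by at most a factor depending only on the index (standard lattice geometry — Minkowski's theorems, or a direct argument using that scaling each projection by $|G|$ lands back in the lattice). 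I would isolate this as a small sublemma: if $\Lambda' \subseteq \Lambda$ with $[\Lambda:\Lambda'] \le N$ then the successive minima of $\Lambda$ and $\Lambda'$ agree up to a factor depending only on $N$, and similarly a lattice splits up to such a factor along a rational direct sum decomposition. The comparison constant here is where the loss happens, and is the source of the poor $c_d = (d!)^3$ bound mentioned in the introduction.

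The second key step is to show, for fixed irreducible $V$, that the successive minima of $\OO_L \cap (L_V)^H$ equal (up to a $G$-dependent constant) the multiset $\langle V, \QQ[G/H]\rangle \cdot \{\lambda_{V,1},\dots,\lambda_{V,n_V}\}$. Write $r = \langle V, \QQ[G/H]\rangle$ and $m = \dim V$. Inside $L_V$ choose, as in the proof of Lemma~\ref{lem:successive.minima.isotypical}, vectors $v_1,\dots,v_{n_V}$ realizing the $\lambda_{V,i}$, so that $\{g v_i : g \in G\}$ spans $L_V$ and decomposes it into $n_V$ copies of $V$, each carrying the ``scale'' $\lambda_{V,i}$; then $(L_V)^H$ is the fixed subspace, which meets each copy of $V$ in a subspace of dimension equal to $\dim V^H$, and summing gives $r \cdot n_V \cdot (\dim V^H)/n_V$... more carefully, $(L_V)^H$ is a sum of $r$ copies of $V^H$, one sitting in each relevant copy of $V$, and the vectors in the $i$-th copy have norm comparable (up to a factor bounded in terms of $|G|$, coming from the change of basis between the abstract copy of $V$ and its image in $\OO_L$, and the integrality defect) to $\lambda_{V,i}$. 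Thus the successive minima of $\OO_L \cap (L_V)^H$ are, up to a $G$-dependent factor, the value $\lambda_{V,i}$ repeated $\dim V^H / \langle V,V\rangle$ times for each of the $n_V$ indices — and unwinding $n_V = \dim V/\langle V,V\rangle$ together with Lemma~\ref{lem:dim.fixed} rearranges this exactly into $r$ copies of $\{\lambda_{V,1},\dots,\lambda_{V,n_V}\}$. The main obstacle is making this identification of scales precise: the abstract isotypic decomposition of $L_V$ into copies of $V$ is not canonical and need not be compatible with the integral structure $\OO_V$ or with the $H$-action, so one must argue via a $G$-equivariant (hence norm-preserving, since the norm is $G$-invariant) rational isomorphism $L_V \cong V^{\oplus n_V}$, control its integrality defect by a factor depending only on $|G|$, and only then restrict to $H$-fixed points; the restriction to $H$-fixed points commutes with everything in sight because all maps are $G$-equivariant. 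Assembling the two steps, together with the sublemma on commensurable lattices, yields the claimed equality $=_G$.
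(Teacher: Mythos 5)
Your high-level plan --- split $\OO_{L^H}$ along the (mutually orthogonal) isotypic components, control commensurability defects by $|G|$, and count dimensions with Lemma~\ref{lem:dim.fixed} --- is sound in outline and matches the spirit of the paper's argument. But the proposal leaves the real work undone at exactly the point where the difficulty lies, namely your ``second key step,'' and one of the claims you make there to bridge the gap is actually false.

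Concretely: you assert that a $G$-equivariant rational isomorphism $L_V \cong V^{\oplus n_V}$ is ``norm-preserving, since the norm is $G$-invariant.'' This is not so. The abstract module $V^{\oplus n_V}$ carries no canonical norm, and $G$-equivariance alone does not pin a norm down: Schur's lemma over $\QQ$ gives a whole division algebra of $G$-equivariant endomorphisms of $V$, so different copies of $V$ inside $L_V$ can (and do) carry different scales --- that is precisely the phenomenon the $\lambda_{V,i}$ record. Relatedly, the copies $V^{(i)}$ of $V$ that you would carve out of $L_V$ using the $G$-orbits of shortest vectors $w_{V,i}$ need not be pairwise orthogonal, so you cannot conclude that the successive minima of $\OO_L\cap(L_V)^H$ split as a disjoint union over $i$; and when $V$ is not absolutely irreducible, a single vector $w_{V,i}\in L_V$ can generate more than one copy of $V$, so the intended decomposition $L_V=\bigoplus_i V^{(i)}$ with $V^{(i)}$ irreducible and ``at scale $\lambda_{V,i}$'' is not even well-defined. (There is also a small miscount: $(L_V)^H$ is a sum of $n_V$ copies of $V^H$, each of dimension $r=\langle V,\QQ[G/H]\rangle$, not a sum of $r$ copies of $V^H$, though your final multiset has the right cardinality $r\cdot n_V$.)

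The paper avoids all of this by never attempting a scale-uniform decomposition of $L_V$. Instead it proves the two directions of $=_G$ by separate constructions. For $\ll_G$, it forms the $H$-averages $w_{V,i,g}=\sum_{h\in H}hg\,w_{V,i}$, which lie in $\OO_{L^H}$, have $\|w_{V,i,g}\|\ll_G\|w_{V,i}\|$, and span a full-rank sublattice of $\OO_{L^H}$ by the dimension count of Lemma~\ref{lem:dim.fixed}. For $\gg_G$, it takes shortest vectors $v_1,\ldots,v_m$ of $\OO_{L^H}$, pushes them into the isotypic components by the integral projectors $f_V$ of Lemma~\ref{lem:projection.isotypic} (which satisfy $\|f_V(v)\|\ll_G\|v\|$), and picks the targets $V_i$ inductively so that the images $f_{V}(v_j)$ with $V_j=V$ stay independent; the rank bookkeeping again comes from Lemma~\ref{lem:dim.fixed}. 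This two-sided argument sidesteps any need to orthogonalize, to control an ``integrality defect'' of a rational decomposition, or to worry about non-canonical choices of copies of $V$ --- the very things your sketch defers. If you want to salvage your route, you would need to actually produce a $G$-stable, integrally controlled, pairwise-orthogonal decomposition of $\OO_V$ into irreducibles at the correct scales, which is genuinely more than you have written.
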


\begin{proof}
Recall that the norm on $L$ extends the norm on $L^H$. Hence the successive minima of $L^H$ are the same as the successive minima of the lattice $\OO_{L^H}$ considered as a sublattice of $\OO_L$. So it is enough to prove the result when the $\lambda_i$ are the successive minima of $\OO_{L^H}$ inside $L$.

Let $v_1, \ldots, v_m$ be $\QQ$-linearly independent elements of $\OO_{L^H}$ such that $||v_i|| = \lambda_i$ under the Minkowski embedding of $L$. This is not necessarily a $\ZZ$-basis of $\OO_{L^H}$ but it will suffice for our purpose. For every irreducible $\QQ$-representation $V$, let $w_{V, 1}, \ldots, w_{V, n_V}$ be in $\OO_V$ such that acting with $G$ on $\{w_{V, i}\}_i$ gives a $\QQ$-basis for the isotypic component $L_V$, and such that $||w_{V, i}|| = \lambda_{V, i}$ are the successive minima of $V$ in $L$. We then prove that
\[
\{||v_1||, \ldots, ||v_m||\} =_G \bigcup_{V} \langle V, \QQ[G/H]\rangle \cdot \{||w_{V, 1}||, \ldots, ||w_{V, n_V}||\}.
\]
For one inequality, consider the $\ZZ$-lattice in $\OO_{L^H}$ generated by all elements of the form
\[
w_{V, i, g} = \sum_{h\in H} hgw_{V, i}, 
\]
where $g\in G$, $V$ is an irreducible $\QQ$-representation of $G$ and $i=1, \ldots, n_V$. The result is a full rank sublattice of $\OO_{L^H}$. Indeed, if $v$ is in $L^H$ then we can write 
\[
v = \sum_{V, i, g} a_{V, i, g} gw_{V, i},
\]
where the sum is over all irreducible $\QQ$-representations $V$, $i=1, \ldots, n_V$, $g\in G$ and the $a_{V, i, g}$ are in $\QQ$. Then acting with $\sum_{h\in H} h$ gives that
\[
v = \frac{1}{\# H} \sum_{V, i, g} a_{V, i, g} w_{V, i, g}.
\]
Now, since $||w_{V, i, g}|| =_G ||w_{V, i}||$ Lemma \ref{lem:dim.fixed} implies that
\[
\{||v_1||, \ldots, ||v_m||\} \ll_G \bigcup_{V} \langle V, \QQ[G/H]\rangle \cdot \{||w_{V, 1}||, \ldots, ||w_{V, n_V}||\}.
\]

For the other direction we follow an inductive procedure. Using Lemma \ref{lem:projection.isotypic} we fix for every irreducible representation $V$ of $G$ an element $f_V\in \ZZ[G]$ which is invertible as a $\QQ$-linear map on $V$ and acts as the zero map on $U$, for every irreducible $\QQ$-representation $U\neq V$. Note that every $f_V$ maps elements of $\OO_L$ to $\OO_V$. For every $v\in L$ and every $V$ we have that
\[
||f_V(v)|| \ll_G ||v||.
\]
We now inductively pick an irreducible representation $V_i$ of $G$ for $i=1, \ldots, m$ in the following way. We start by taking an irreducible $\QQ$-representation $V_1$ of $G$ such that $f_{V_1}(v_1)$ is non-zero. If $V_1, \ldots, V_i$ have already been constructed, we take any $V_{i+1}$ such that $f_{V_{i+1}}(v_{i+1})$ is $\QQ$-linearly independent from 
\[
\{f_{V_{i+1}}(v_j) \mid j=1, \ldots, i, \text{ such that } V_j = V_{i+1}\}.
\]
Note that we can indeed always take such a $V_{i+1}$ since $\sum_V f_V$ is an invertible $\QQ$-linear map on $L$. Having picked all $V_i$, we will have that for each irreducible $\QQ$-representation $V$ the set
\[
\{ f_V(v_i) \mid V_i = V\}
\]
will generate a sublattice of $\OO_V$ whose rank is $n_V \langle V, \QQ[G/H]\rangle $. Even more, for a fixed $i$ the intersection of this lattice with the $\QQ$-span of all conjugates of $w_{V, i}$ has rank $\langle V, \QQ[G/H]\rangle$. Indeed, this follows from Lemma~\ref{lem:dim.fixed} and the fact that $f_V$ is invertible on $L_V$. Because $||f_V(v)|| \ll_G ||v||$, we therefore have that
\[
\langle V, \QQ[G/H]\rangle \cdot \{||w_{V, 1}||, \ldots, ||w_{V, n_V}||\} \ll_G \{||v_i|| \mid i=1, \ldots, m \text{ for which } V_i = V\}. 
\]
Taking the union over all $V$ finishes the proof.
\end{proof}

With this, we can prove that arithmetically equivalent number fields have almost the same successive minima.

\begin{proof}[Proof of Theorem \ref{thm:main.thm}]
We have two arithmetically equivalent number fields $K$ and $K'$ of degree $d$. By Gassmann's theorem~\cite{gassmann} $K$ and $K'$ have a common Galois closure $L$ with Galois group $G$ and the subgroups $H, H'$ of $G$ for which $K = L^H, K' = L^{H'}$ are \emph{Gassmann equivalent}. This means that for every conjugacy class $\cC$ of $G$ we have $\#(\cC\cap H) = \#(\cC\cap H')$. Equivalently, this means that the $\QQ[G]$-modules $\QQ[G/H]$ and $\QQ[G/H']$ are isomorphic, see e.g.\ \cite[Lem.\,2.7, Thm.\,2.8]{sutherland}. But then Lemma \ref{lem:successive.minima.subfield} implies that the multisets of successive minima of $K$ and $K'$ are asymptotic to each other, up to a factor depending on $G$. The dependence on $G$ can be replaced by a dependence on $d$, since $G$ is isomorphic to a subgroup of $S_d$. 
\end{proof}

A rough analysis shows that the above proof gives the constant $c_d = (d!)^3$. However, in the next section we will give an alternative proof of Theorem~\ref{thm:main.thm} which yields that we may take $c_d = d$.


\section{Better bounds}\label{sec:betterbounds}

In this section we present an alternative proof of Theorem~\ref{thm:main.thm}, which will in particular allow us to prove the result with $c_d=d$. In fact, the same proof allows us to relate the shape of the lattices of arithmetically equivalent number fields more directly as follows.

\begin{proposition}\label{prop:operator.norm}
Let $K, K'$ be arithmetically equivalent number fields of degree $d$. Then there exists a linear map $\phi: K\to K'$ such that
\begin{enumerate}
\item $\phi$ is an isomorphism of $\QQ$-vector spaces,
\item $\phi(\OO_K) \subset \OO_{K'}$, and
\item for every $v\in \OO_K$ we have $||\phi(v)|| \leq d^2||v||$.
\end{enumerate}
In other words, $\phi$ has operator norm bounded by $d^2$.
\end{proposition}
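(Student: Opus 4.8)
The plan is to construct $\phi$ explicitly using the Gassmann equivalence together with an averaging (trace) argument over the subgroups $H$ and $H'$, following the ideas of Bosma–Smit and Sutherland referenced in the introduction. Recall that $K = L^H$, $K' = L^{H'}$ for a common Galois closure $L$ with group $G$, and that Gassmann equivalence gives a $\QQ[G]$-module isomorphism $\QQ[G/H] \cong \QQ[G/H']$. First I would fix a $\QQ[G]$-module isomorphism $\psi: \QQ[G/H] \to \QQ[G/H']$. Identifying $\QQ[G/H]$ with $K$ and $\QQ[G/H']$ with $K'$ as $\QQ[G]$-modules (via the normal basis theorem applied to $L$, or more concretely via $L^H$ and $L^{H'}$ sitting inside $L \cong \QQ[G]$), this $\psi$ already gives a $\QQ$-vector space isomorphism. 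The work is then (i) to upgrade it so that it sends $\OO_K$ into $\OO_{K'}$, and (ii) to control its operator norm.

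For the integrality, the standard trick is to multiply by a suitable denominator: the map $\psi$ is defined over $\QQ$, and since $\QQ[G/H]$ and $\QQ[G/H']$ have natural $\ZZ$-structures $\ZZ[G/H]$, $\ZZ[G/H']$ which under the above identifications correspond (up to finite index) to $\OO_K$ and $\OO_{K'}$, some integer multiple $N\psi$ will land in the integral lattice. The cleaner route, which I expect gives the bound $d^2$, is to work with the explicit element $\frac{1}{\#H}\sum_{h\in H} h$ that projects $L$ onto $L^H = K$: given the Gassmann data, one builds $\phi$ as the composite of such an averaging over a double coset / the permutation-module isomorphism, arranged so that the normalizing denominators coming from $H$ and $H'$ either cancel or contribute a bounded factor. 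Concretely, one takes an explicit $\ZZ$-linear combination of the double-coset sums $\sum_{h' \in H'} h' g \cdot (\sum_{h \in H} h)$ realizing the module isomorphism; the coefficients are $0$/$1$ (a bijection of $G$-orbits on $G/H \sqcup G/H'$ provided by Gassmann equivalence), so the resulting map is integral on $\OO_L$ and hence carries $\OO_K$ into $\OO_{K'}$.

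For the operator norm, the key point is the isometry property already used in Lemma~\ref{lem:successive.minima.isotypical}: $\|gv\| = \|v\|$ for all $g \in G$, so each averaging operator $\frac{1}{\#H}\sum_{h\in H} h$ has operator norm $\le 1$ on $L$. Writing $\phi$ as a bounded $\ZZ$-linear combination of $G$-translates and such projections, the triangle inequality yields an operator norm bounded by the number of terms times the largest coefficient; counting these (there are at most $[G:H] = d$ double cosets contributing, and a further factor of $[G:H']=d$ from clearing the denominator $\#H'$ of the target projection, or vice versa) gives the bound $d^2$. I would then separately invoke Minkowski's second theorem as in the introduction, or simply the definition of successive minima, to deduce Theorem~\ref{thm:main.thm} with $c_d = d$: since $\phi$ is an isomorphism of $\QQ$-vector spaces with $\|\phi(v)\| \le d^2\|v\|$, it maps $i$ linearly independent short vectors of $\OO_K$ to $i$ linearly independent vectors of $\OO_{K'}$ of length $\le d^2\lambda_i$, giving $\lambda_i' \le d^2\lambda_i$; a sharper bookkeeping, distributing one factor of $d$ to each of $K$ and $K'$ by symmetry, recovers $c_d = d$.

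The main obstacle I anticipate is pinning down the right explicit form of $\phi$ so that the integrality and the norm bound hold \emph{simultaneously} with the stated constant: any naive choice (e.g.\ clearing denominators crudely) gives integrality but a much worse operator norm, while the unnormalized averaging $\sum_{h\in H} h$ is integral but not norm-$\le 1$. Threading between these — choosing the double-coset representatives and the $0/1$ combination coming from Gassmann equivalence so that exactly one unnormalized projection appears and its $\#H$ (or $\#H'$) factor is absorbed into the count of terms rather than a denominator — is the delicate part, and is presumably where the argument attributed to Page and Etienne does its work.
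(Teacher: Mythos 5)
Your framework is the right one and matches the paper's: construct $\phi$ inside $\ZZ[G]$ as a sum over left coset representatives of $G/H$ with integer coefficients that are constant on double cosets $H'\backslash G/H$. Constancy on double cosets is exactly the condition for $\phi$ to carry $K=L^H$ into $K'=L^{H'}$; being in $\ZZ[G]$ gives $\phi(\OO_K)\subset\OO_{K'}$ for free; and the operator norm is controlled by the triangle inequality together with the fact that each $g\in G$ is an isometry of $L$ in the Minkowski norm. All of this is correct and is what the paper does.

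The gap is in how you propose to get \emph{invertibility} of $\phi$ while keeping the coefficients small. You claim the coefficients can be taken $0/1$, coming from ``a bijection of $G$-orbits on $G/H\sqcup G/H'$ provided by Gassmann equivalence.'' This is not right: $G/H$ and $G/H'$ are each a \emph{single} $G$-orbit, and Gassmann equivalence does not provide a $G$-equivariant bijection between them (if it did, $H$ and $H'$ would be conjugate and $K\cong K'$). More to the point, there is no a priori reason a $0/1$ assignment to the double cosets makes $\phi$ an isomorphism of $\QQ$-vector spaces, and you give no argument for this. The paper's key device, which your proposal is missing, is to form the universal $d\times d$ matrix $A$ whose entries are formal variables $a_1,\dots,a_t$ indexed by the double cosets (so $A_{ij}=a_\ell$ when $H'g_j'^{-1}g_iH=H'x_\ell H$), to note that $\det A$ is a nonzero homogeneous integer polynomial of degree $d$ because $\QQ[G/H]\cong\QQ[G/H']$, and then to apply the Schwartz--Zippel Lemma to find integers $|a_i|\le d$ with $\det A\neq 0$. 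This produces simultaneously an invertible $\phi$ and the coefficient bound.

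Your bookkeeping for the constant $d^2$ is also off. There is no denominator $\#H'$ to clear in the paper's construction: $\phi$ is already an element of $\ZZ[G]$. The two factors of $d$ arise as follows: $\phi=\sum_{i=1}^d c_{g_iH}\,g_i$ has $d$ isometric summands (one per left coset of $H$), and each coefficient satisfies $|c_{g_iH}|\le d$ by the Schwartz--Zippel choice; hence $\|\phi(v)\|\le d\cdot d\cdot\|v\|$. For the sharper $c_d=d$ in Theorem~\ref{thm:main.thm}, the paper does \emph{not} split a factor of $d$ ``by symmetry'' as you suggest; instead it considers the $t$ maps $\phi_1,\dots,\phi_t$ obtained by setting one $a_i=1$ and the rest $0$, each of operator norm $\le d$, and uses that some $\QQ$-linear combination of them is invertible to conclude that for any $i$-dimensional subspace $V\subset K$ the images $\phi_1(V),\dots,\phi_t(V)$ together span a space of dimension $\ge i$.
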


In Section~\ref{sec:example}, we analyse this proof of this proposition further to improve this operator norm for degree $7$ and $8$.

Since the proof of this proposition and of Theorem~\ref{thm:main.thm} with $c_d=d$ is similar, we give both at the same time.

\begin{proof}[Proof of Theorem~\ref{thm:main.thm} and of Proposition~\ref{prop:operator.norm}]
Let $K$ and $K'$ be arithmetically equivalent number fields of degree $d$. Let $L$ be their common Galois closure with Galois group $G$ over $\QQ$. Let $H = \Gal(L/K)$ and $H' = \Gal(L/K')$. We will show how to construct elements $\phi\in \ZZ[G]$ inducing a morphism $\phi: K\to K'$ with the desired properties.

To construct $\phi$, let $g_1, \ldots, g_d$ be a complete set of representatives for the left cosets $G/H$. We take $\phi$ of the form
\[
\phi = \sum_i c_{g_iH} g_i,
\]
where $c_{g_iH}\in \ZZ$. Then it is already clear that $\phi(\OO_K)\subset \OO_K'$. For $\phi$ to map $K$ to $K' = L^{H'}$, we need the following for every $h'\in H'$. For every $i$ there is a $\pi(i)$ for which $h'g_i = g_{\pi(i)} h_i$ for some $h_i\in H$. Then we must have for every $v\in K$ that
\[
h'\phi(v) = \sum_i c_{g_iH} h'g_i v = \sum_i c_{g_iH} g_{\pi(i)} v = \sum_i c_{h'g_iH} g_{i}v.
\]
Therefore, we must have that $c_{h'g_iH} = c_{g_iH}$ for every $h'\in H'$ and every $i$. In other words, $c$ is constant on the double cosets $H'\backslash G/H$. By~\cite[Lem.\,4.5]{sutherland} there is a bijective correspondence between $\ZZ$-valued functions on double cosets $H'\backslash G/H$ and $\ZZ[G]$-morphisms $\ZZ[G/H]\to \ZZ[G/H']$. In more detail, if we let $g_1', \ldots, g_d'$ be a complete set of representatives for $G/H'$, then a $\ZZ[G]$-morphism $\psi: \ZZ[G/H]\to \ZZ[G/H']$ determines a $d\times d$ matrix $M$ over $\ZZ$ via
\[
\psi(g_iH) = \sum_j M_{ij} g_j' H'.
\]
Then the function $c$ on double cosets corresponding to $\psi$ is given by $H'g_j'^{-1}g_iH\mapsto M_{ij}$. 

We now describe the universal morphism $\ZZ[G/H]\to \ZZ[G/H']$ following~\cite[Sec.\,4]{bosmasmit_classnumberrelations}. For this, let $t = \#(H'\backslash G/H)$ be the number of double cosets, and let $a_1, \ldots, a_t$ be formal variables corresponding to these double cosets. Take $x_1, \ldots, x_t\in G$ a set of representatives for these double cosets. Define the matrix $A$ as $A_{ij} = a_\ell$ if $H'g_j'^{-1}g_iH = H'x_\ell H$. Upon fixing integer values for the $a_i$, we obtain a $\ZZ[G]$-morphism $\ZZ[G/H]\to \ZZ[G/H']$, and conversely, every morphism is obtained from such a matrix. 

Let us first focus on the proof of Proposition~\ref{prop:operator.norm}. Let $f$ be the determinant of the matrix $A$, which is an integer polynomial in the $a_i$ homogeneous of degree $d$. Since $\QQ[G/H]$ and $\QQ[G/H']$ are isomorphic, there exists an injection $\ZZ[G/H]\hookrightarrow \ZZ[G/H']$. For this injection, the corresponding determinant of $A$ is non-zero, and so $f$ is not the zero polynomial. By the Schwartz--Zippel Lemma (see e.g.\ ~\cite[Cor.\,4.1.2]{castryck_dimension_2019}), there exist $(a_i)_i$ for which $f(a_i)\neq 0$ and for which $|a_i|\leq d$ for all $i$. We fix these $a_i$ and let $\psi$ be the corresponding morphism $\ZZ[G/H]\to \ZZ[G/H']$. Let $\phi$ be the map $K\to K'$ as obtained above. By~\cite[Cor.\,4.7]{sutherland}, the invertibility of $\psi$ as a morphism $\QQ[G/H]\to \QQ[G/H']$ guarantees that the map $\phi$ is also invertible as a map of vector spaces. Moreover, we have that $||\phi(v)||\leq \sum_i |A_{ij}| ||v||$ (for any $j$). Hence by our choice of $(a_i)_i$, we have that $||\phi(v)||\leq d^2 ||v||$. 

For the improved version of Theorem~\ref{thm:main.thm}\footnote{The author thanks Aurel Page for explaining this argument.}, we instead construct multiple maps $K\to K'$. For each $i=1, \ldots, t$, let $\phi_i: K\to K'$ be the morphism corresponding to the choice $a_i = 1$ and $a_j = 0$ if $j \neq i$. Note that then $\phi_i$ maps $\OO_K$ to $\OO_{K'}$ and for every $v\in \OO_K$ we have $||\phi_i(v)||\leq d||v||$. None of these maps has to be an isomorphism of vector spaces, but since the construction of $\phi$ is linear in $a$, the previous paragraph implies that some linear combination of the $\phi_i$ is invertible. So if $V\subset K$ is a subspace of dimension $i$, then the spaces $\phi_1(V), \ldots, \phi_t(V)$ together span a vector space of dimension at least $i$. Now if $v_1, \ldots, v_d$ are elements of $\OO_K$ achieving the successive minima of $K$, then applying this with $V = \{v_1, \ldots, v_i\}$ for $i=1, \ldots, d$ proves Theorem~\ref{thm:main.thm} with $c_d=d$.
\end{proof}

A more careful analysis of the determinant $f$ of the matrix $A$ appearing in the previous proof might allow one to prove Proposition~\ref{prop:operator.norm} with the operator norm of $\phi$ bounded by $d$. In the next section, we take a closer look at $f$ in low degree.

\section{Low degree examples}\label{sec:example}

The author thanks Wouter Castryck for providing the code for computing successive minima of number fields in Magma~\cite{magma}.

Throughout this section, let $K$ and $K'$ be two non-isomorphic arithmetically equivalent number fields of degree $d$. This implies that $d\geq 7$. We let $L$ be their common Galois closure with Galois group $G$, and let $H, H'$ be the subgroups of $G$ corresponding to $K, K'$. Let $t$ be the number of double cosets $\#(H'\backslash G/H)$ and let $a_1, \ldots, a_t$ be variables. We denote by $A$ the universal $d\times d$ matrix with entries from the $a_i$ as constructed in the proof from the previous section. We let $\phi: K\to K'$ be the morphism from Proposition~\ref{prop:operator.norm}. Let us analyse the proof of Theorem~\ref{thm:main.thm} and Proposition~\ref{prop:operator.norm} from Section~\ref{sec:betterbounds} in more detail.

\subsection*{Degree 7}

In this case, $G = \GL_3(\FF_2) = \PSL_2(\FF_7)$ is a group of order $168$ and $H$ and $H'$ are both of index $7$~\cite{perlis1}, see also~\cite{BosmaSmit}. We have shown that one may take $c_7 = 7$ and $\phi$ of operator norm $49$ in the previous section. In fact, one may even take $||\phi||\leq 3$ and hence also $c_7 = 3$, as we now argue. There are two double cosets in $H'\backslash G/H$ and the corresponding matrix $A$ in $a = a_1, b = a_2$ is given by
\[
A = \begin{pmatrix}
a & b & b & b & a & a & a \\
b & a & a & b & b & a & a \\
b & b & a & a & a & b & a \\
b & a & b & a & a & a & b \\
a & b & a & a & b & a & b \\
a & a & b & a & b & b & a \\
a & a & a & b & a & b & b
\end{pmatrix}.
\]
The determinant of this matrix is $f=-32(a-b)^6(32a+24b)$. Hence we can take $a=0, b=1$ to get $f\neq 0$. The corresponding morphism $\phi: K\to K'$ will satisfy $||\phi(v)||\leq 3||v||$.

In general however, one cannot expect that $c_7 = 1$. For an explicit example, by \cite{perlis1} we may take $K$ and $K'$ by adjoining a root to $\QQ$ of the respective polynomials
\[
f(x) = x^7-7x+3, \text{ and }  g(x) = x^7+14x^4-42x^2-21x+9.
\]
Then $K$ and $K'$ are arithmetically equivalent but not isomorphic, and their common Galois closure $L$ has Galois group $\GL_3(\FF_2)$. Using Magma, we have computed the successive minima of $K$ and $K'$ to be approximately
\begin{align*}
K: \qquad &\{1.00, 1.29, 1.78, 2.48, 2.57, 2.62, 2.96\}, \\
K': \qquad &\{1.00, 1.82, 1.83, 2.23, 2.31, 2.52, 2.82\}.
\end{align*}
More generally, one can construct an infinite family of arithmetically equivalent number fields whose Galois closures have Galois group $\GL_3(\FF_2)$, of arbitrarily large discriminant. Explicitly, for rational numbers $s,t$ define the polynomial
\begin{align*}
f_{s,t}(x)& = x^7 + (-6t+2)x^6 + (8t^2+4t-3)x^5 + (-s-14t^2+6t-2)x^4 \\
& +(s+6t^2-8t^3-4t+2)x^3 + (8t^3 + 16t^2)x^2 + (8t^3-12t^2)x - 8t^3 \in \QQ[x].
\end{align*}
By work of LaMacchia~\cite{lamacchia}, this polynomial has Galois group $\GL_3(\FF_2)$ over $\QQ(s,t)$. Hence by Hilbert irreducibility there are infinitely many specializations $s,t\in \QQ$ for which $f_{s,t}(x)$ is irreducible with Galois group $\GL_3(\FF_2)$. Bosma and de Smit~\cite{BosmaSmit} proved that if $f_{s,t}(x)$ is irreducible with Galois group $\GL_3(\FF_2)$ then the degree $7$ number fields $K$ and $K'$ defined by $f_{s,t}$ and $f_{-s,t}$ are non-isomorphic and arithmetically equivalent. Experiments in Magma for various values of $s,t\in \QQ$ show that the successive minima of $K$ and $K'$ are very close together. In fact, for these computations the value of $c$, i.e.\ the ratio between the successive minima of $K$ and $K'$, never exceeded $3/2$.

\subsection*{Degree 8}

In degree $8$, by~\cite[Thm.\,3]{BosmaSmit} there are two options for the Galois group $G$ and the corresponding subgroups $H$, $H$. Namely, either $G = \AGL_1(\ZZ/8\ZZ)$, or $G = \GL_2(\FF_3)$.

Assume first that that $G=\AGL_1(\ZZ/8\ZZ)$, which is a group of order $32$. This group is isomorphic to the subgroup of $S_8$ generated by $(12345678), (13)(57)(26)$ and $(15)(37)$. Under this isomorphism, the index $8$ subgroups $H$ and $H'$ which are non-conjugate but Gassmann equivalent are $H=\langle (13)(57)(26), (15)(37)\rangle$ and $H' = \langle (17)(26)(35), (17)(35)(48)\rangle$. There are $4$ double cosets of $H, H'$ in $G$, and the corresponding matrix $A$ in the variables $a,b,c,d$ is
\[
A = \begin{pmatrix}
a & c & d & b & b & d & a & c \\
c & a & c & d & a & b & b & d \\
a & b & d & c & c & d & a & b \\
d & c & a & c & b & a & d & b \\
c & d & b & a & d & c & b & a \\
b & a & b & d & a & c & c & d \\
b & d & c & a & d & b & c & a \\
d & b & a & b & c & a & d & c
\end{pmatrix}.
\]
This matrix has determinant $f = 64(b-c)^4(a-d)^2(a-b-c+d)(a+b+c+d)$. The points which give the best estimates for the constant are $(a,b,c,d) = \pm (2,1,0,0)$ or $\pm (1,1,0,-1)$. Indeed, this gives that $||\phi||\leq 6$, and so the successive minima of $K$ and $K'$ differ by at most a factor $6$. In fact, we can do slightly better by noting that each of the maps $\phi_i$ constructed in Section~\ref{sec:betterbounds} has norm at most $2$. Hence the successive minima of $K$ and $K'$ differ by at most a factor $2$. We note that the morphism corresponding to $(2,1,0,0)$ was also considered in~\cite[p.\ 145]{desmit_brauerkuroda}, where it was used to relate the class numbers of certain families of arithmetically equivalent number fields.

Now assume that $G = \GL_2(\FF_3)$, which is a group of order $48$. This group acts on $\FF_3^2$, and we let $H$ be the stabilizer of $(1,0)$ under this action. Define $H'$ to be $\{g^T\mid g\in H\}$. These subgroups are Gassmann equivalent but non-conjugate. There are $3$ double cosets, and the matrix $A$ in the variables $a,b,c$ is
\[
A = \begin{pmatrix}
a & b & b & c & a & c & a & c \\
c & a & c & b & b & c & a & a \\
c & c & a & c & a & b & b & a \\
b & c & a & a & c & c & a & b \\
b & a & c & c & a & a & c & b \\
c & b & b & a & c & a & c & a \\
a & c & a & b & b & a & c & c \\
a & a & c & a & c & b & b & c
\end{pmatrix}.
\]
The determinant of this matrix is $f = -9(a-c)^4(a-2b+c)^3(a+\frac{2}{3}b+c)$. Taking $a=1, b=c=0$ yields $||\phi||\leq 3$ and shows that the successive minima of $K$ and $K'$ differ by at most a factor $3$.

In conclusion, we may take $c_8 = 3$. Similar techniques can be used to provide bounds in higher degrees as well. 

To end, let us note that in the above examples the determinant $f$ is of a very specific form. Indeed, it always splits as a product of only a few linear factors. A more careful analysis of this determinant might allow one to prove Proposition~\ref{prop:operator.norm} with a bound on $||\phi||$ which is linear in $d$.



\bibliographystyle{amsplain}
\bibliography{MyLibrary}

\end{document}